\theoremstyle{plain}
\newtheorem{thm}{Theorem}
\theoremstyle{remark}
\newtheorem{rem}{Remark}
\date{Commenced on 29 January 2009 and completed on 31 January 2009 in Melbourne; Revised on 19 March 2009 in Jiaozuo}
\date{}
\begin{document}

\title[Sharpening and generalizations of Shafer-Fink's double inequality]
{Sharpening and generalizations of Shafer-Fink's double inequality for the arc sine function}

\author[F. Qi]{Feng Qi}
\address[F. Qi]{Department of Mathematics, College of Science, Tianjin Polytechnic University, Tianjin City, 300160, China}
\email{\href{mailto: F. Qi <qifeng618@gmail.com>}{qifeng618@gmail.com}, \href{mailto: F. Qi <qifeng618@hotmail.com>}{qifeng618@hotmail.com}, \href{mailto: F. Qi <qifeng618@qq.com>}{qifeng618@qq.com}}
\urladdr{\url{http://qifeng618.spaces.live.com}}

\author[B.-N. Guo]{Bai-Ni Guo}
\address[B.-N. Guo]{School of Mathematics and Informatics, Henan Polytechnic University, Jiaozuo City, Henan Province, 454010, China}
\email{\href{mailto: B.-N. Guo <bai.ni.guo@gmail.com>}{bai.ni.guo@gmail.com}, \href{mailto: B.-N. Guo <bai.ni.guo@hotmail.com>}{bai.ni.guo@hotmail.com}}
\urladdr{\url{http://guobaini.spaces.live.com}}

\begin{abstract}
In this paper, we sharpen and generalize Shafer-Fink's double inequality for the arc sine function.
\end{abstract}

\keywords{sharpening, generalization, Shafer-Fink's double inequality, arc sine function, monotonicity}

\subjclass[2000]{Primary 33B10; Secondary 26D05}

\thanks{The first author was partially supported by the China Scholarship Council}

\thanks{This paper was typeset using \AmS-\LaTeX}

\maketitle

\section{Introduction and main results}

In~\cite[p.~247, 3.4.31]{mit}, it was listed that the inequality
\begin{equation}\label{Shafer-ineq-arcsin}
\arcsin x>\frac{6\bigl(\sqrt{1+x}\,-\sqrt{1-x}\,\bigr)}{4+\sqrt{1+x}\,+\sqrt{1-x}\,} >\frac{3x}{2+\sqrt{1-x^2}\,}
\end{equation}
holds for $0<x<1$. It was also pointed out in~\cite[p.~247, 3.4.31]{mit} that these inequalities are due to R. E. Shafer, but no a related reference is cited. By now we do not know the very original source of inequalities in~\eqref{Shafer-ineq-arcsin}.
\par
In the first part of the short paper~\cite{Fink-Beograd-Univ-95}, the inequality between the very ends of~\eqref{Shafer-ineq-arcsin} was recovered and an upper bound for the arc sine function was also established as follows:
\begin{equation}\label{Shafer-Fink-ineq-arcsin}
\frac{3x}{2+\sqrt{1-x^2}\,}\le\arcsin x\le\frac{\pi x}{2+\sqrt{1-x^2}\,},\quad 0\le x\le 1.
\end{equation}
Therefore, we call~\eqref{Shafer-Fink-ineq-arcsin} the Shafer-Fink's double inequality for the arc sine function.
\par
In~\cite{Malesevic-Beograd-Univ-97}, the right-hand side inequality in~\eqref{Shafer-Fink-ineq-arcsin} was improved to
\begin{equation}\label{Malesevic-Beograd-Univ-97-ineq}
\arcsin x\le\frac{\pi x/(\pi-2)}{2/(\pi-2)+\sqrt{1-x^2}\,},\quad 0\le x\le1.
\end{equation}
\par
In~\cite{Oppeheim-Zhu-mia}, the inequality~\eqref{Malesevic-Beograd-Univ-97-ineq} was recovered and the following Shafer-Fink type inequalities were derived:
\begin{gather}\label{Zhu-mia-1}
\frac{\pi(4-\pi)x}{2/(\pi-2)+\sqrt{1-x^2}\,} \le\arcsin x,\quad 0\le x\le1;\\
\frac{(\pi/2)x}{1+\sqrt{1-x^2}\,} \le\arcsin x,\quad 0\le x\le1. \label{Zhu-mia-2}
\end{gather}
\par
Note that the lower bounds in~\eqref{Shafer-Fink-ineq-arcsin}, \eqref{Zhu-mia-1} and~\eqref{Zhu-mia-2} are not included each other.
\par
The main aim of this paper is to sharpen and generalize the above Shafer-Fink type double inequalities.
\par
Our main results can be stated as follows.

\begin{thm}\label{Shafer-Fink-thm}
For $\alpha\in\mathbb{R}$ and $x\in(0,1]$, the function
\begin{equation}
f_\alpha(x)=\Bigl(\alpha+\sqrt{1-x^2}\,\Bigr)\frac{\arcsin x}{x}
\end{equation}
is strictly
\begin{enumerate}
\item
increasing if and only if $\alpha\ge2$;
\item
decreasing if and only if $\alpha\le\frac\pi2$.
\end{enumerate}
Moreover, when $\frac\pi2<\alpha<2$, the function $f_\alpha(x)$ has a unique minimum on $(0,1)$.
\end{thm}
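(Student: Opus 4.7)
The natural approach is to substitute $\theta=\arcsin x\in(0,\pi/2]$, which turns $f_\alpha(x)$ into
$$F_\alpha(\theta)=(\alpha+\cos\theta)\frac{\theta}{\sin\theta}.$$
Since $\mathrm{d}x/\mathrm{d}\theta=\cos\theta>0$ on $(0,\pi/2)$, the monotonicity of $f_\alpha$ in $x$ is equivalent to that of $F_\alpha$ in $\theta$. A direct differentiation, combined with the elementary fact that $\sin\theta-\theta\cos\theta>0$ on $(0,\pi/2]$, shows that the sign of $F_\alpha'(\theta)$ coincides with the sign of $\alpha-\phi(\theta)$, where
$$\phi(\theta)=\frac{\theta-\sin\theta\cos\theta}{\sin\theta-\theta\cos\theta}.$$

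The entire theorem then reduces to the claim that $\phi$ is a strictly decreasing bijection from $(0,\pi/2]$ onto $[\pi/2,2)$. The boundary values are immediate: at $\theta=\pi/2$ one reads off $\phi(\pi/2)=\pi/2$, and comparing the leading Maclaurin terms $\theta-\sin\theta\cos\theta\sim 2\theta^{3}/3$ and $\sin\theta-\theta\cos\theta\sim \theta^{3}/3$ gives $\lim_{\theta\to 0^+}\phi(\theta)=2$.

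The only genuinely nontrivial step, and the expected main obstacle, is proving strict monotonicity. A quotient-rule computation, using $(\theta-\sin\theta\cos\theta)'=2\sin^{2}\theta$ and $(\sin\theta-\theta\cos\theta)'=\theta\sin\theta$, reduces $\phi'(\theta)<0$ to the inequality
$$2\sin^{2}\theta-\theta\sin\theta\cos\theta-\theta^{2}<0,\qquad \theta\in(0,\pi/2].$$
The substitution $s=2\theta$ turns this into $K(s):=s^{2}+s\sin s+4\cos s-4>0$ on $(0,\pi]$, and I would establish this by the standard successive-differentiation technique: one checks $K(0)=K'(0)=K''(0)=K'''(0)=0$ while $K^{(4)}(s)=s\sin s\ge 0$ on $[0,\pi]$, and integrates back four times to get $K>0$ on $(0,\pi]$.

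Granted that $\phi$ decreases strictly from $2$ to $\pi/2$, the three statements follow easily. If $\alpha\ge 2$, then $\alpha>\phi(\theta)$ on $(0,\pi/2]$, so $F_\alpha'>0$ and $f_\alpha$ is strictly increasing; conversely, if $\alpha<2$ then $\alpha<\phi(\theta)$ for $\theta$ near $0$, forcing $f_\alpha$ to decrease there, hence not to be increasing. The case $\alpha\le\pi/2$ is handled symmetrically at the endpoint $\theta=\pi/2$. Finally, when $\pi/2<\alpha<2$, the intermediate value theorem supplies a unique $\theta^{*}\in(0,\pi/2)$ with $\phi(\theta^{*})=\alpha$, and $F_\alpha'$ passes from negative to positive there, giving the unique minimum of $f_\alpha$ on $(0,1)$.
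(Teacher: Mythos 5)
Your proposal is correct, but it follows a genuinely different route from the paper. You substitute $\theta=\arcsin x$ and isolate $\alpha$, reducing the whole theorem to one comparison function $\phi(\theta)=\frac{\theta-\sin\theta\cos\theta}{\sin\theta-\theta\cos\theta}$: since $\sin\theta-\theta\cos\theta>0$ on $(0,\pi/2]$, the sign of $F_\alpha'(\theta)$ is indeed that of $\alpha-\phi(\theta)$ for every real $\alpha$, your endpoint values $\lim_{\theta\to0^+}\phi(\theta)=2$ and $\phi(\pi/2)=\pi/2$ are right, and your reduction of $\phi'<0$ to $K(s)=s^2+s\sin s+4\cos s-4>0$ on $(0,\pi]$, with $K(0)=K'(0)=K''(0)=K'''(0)=0$ and $K^{(4)}(s)=s\sin s\ge0$, checks out, so the monotone cases and the unique interior minimum all follow as you say. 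The paper instead stays in the variable $x$: it factors $f_\alpha'(x)=\frac{\alpha+1/\sqrt{1-x^2}}{x^2}\,h_\alpha(x)$ with $h_\alpha(x)=\frac{x\bigl(\alpha+\sqrt{1-x^2}\bigr)}{1+\alpha\sqrt{1-x^2}}-\arcsin x$, reads the sign of $h_\alpha'(x)$ off the quantity $\alpha^2-2-\alpha\sqrt{1-x^2}$, uses the boundary values $h_\alpha(0)=0$ and $h_\alpha(1)=\alpha-\frac\pi2$, and then must handle $\alpha\le0$ by a separate rearrangement $H_\alpha$ because the prefactor $\alpha+1/\sqrt{1-x^2}$ can change sign. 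Your version buys a uniform treatment of all real $\alpha$ and makes the two thresholds $2$ and $\frac\pi2$ appear transparently as the endpoint values of the strictly decreasing $\phi$, so both ``if and only if'' assertions and the unique minimum drop out at once; the price is the auxiliary trigonometric inequality $K>0$, proved by four differentiations, whereas the paper's argument works only with algebraic expressions in $\sqrt{1-x^2}$ at the cost of more case-splitting.
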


As straightforward consequences of Theorem~\ref{Shafer-Fink-thm}, the following double inequalities may be derived readily.

\begin{thm}\label{Shafer-Fink-thm-ineq}
If $\alpha\ge2$, the double inequality
\begin{equation}\label{Shafer-ineq-arcsin-new1}
\frac{(\alpha+1)x}{\alpha+\sqrt{1-x^2}\,}\le\arcsin x\le\frac{(\pi\alpha/2)x}{\alpha+\sqrt{1-x^2}\,}
\end{equation}
holds on $[0,1]$. If $0<\alpha\le\frac\pi2$, the inequality~\eqref{Shafer-ineq-arcsin-new1} reverses. If $\frac\pi2<\alpha<2$, then the inequality
\begin{equation}\label{max-ineq-inv}
\frac{4\bigl(1-1/\alpha^2\bigr)x}{\alpha+\sqrt{1-x^2}\,}\le \arcsin x \le\frac{\max\{\pi\alpha/2,\alpha+1\}x}{\alpha+\sqrt{1-x^2}\,}
\end{equation}
holds on $[0,1]$.
\end{thm}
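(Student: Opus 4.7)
The plan is to translate each case of Theorem~\ref{Shafer-Fink-thm} into bounds on $\arcsin x$ by reading off the supremum and infimum of $f_\alpha$ on $[0,1]$ and then multiplying through by the positive factor $x/\bigl(\alpha+\sqrt{1-x^{2}}\,\bigr)$. A preliminary computation records the two boundary values
\begin{equation*}
\lim_{x\to 0^{+}}f_\alpha(x)=\alpha+1\qquad\text{and}\qquad f_\alpha(1)=\frac{\pi\alpha}{2},
\end{equation*}
which furnish every extremum that arises at an endpoint.

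For $\alpha\ge 2$ Theorem~\ref{Shafer-Fink-thm} makes $f_\alpha$ strictly increasing, hence $\alpha+1\le f_\alpha(x)\le\pi\alpha/2$ on $(0,1]$, and the multiplication above is exactly~\eqref{Shafer-ineq-arcsin-new1}; both sides vanish at $x=0$. For $0<\alpha\le\pi/2$ the function is decreasing, so the chain of inequalities and hence~\eqref{Shafer-ineq-arcsin-new1} both reverse. These two cases are essentially automatic.

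The main obstacle is the case $\pi/2<\alpha<2$: here $f_\alpha$ attains a unique interior minimum at a point $x_{0}\in(0,1)$ satisfying a transcendental equation, so $f_\alpha(x_{0})$ is not available in closed form. The upper half of~\eqref{max-ineq-inv} remains easy, because the absence of any interior maximum forces $\sup_{[0,1]}f_\alpha=\max\{\alpha+1,\pi\alpha/2\}$. For the lower bound I would exploit the critical-point equation itself. Substituting $\theta=\arcsin x$ in $f_\alpha$ and setting the resulting $\theta$-derivative to zero yields, after one line of trigonometric simplification,
\begin{equation*}
\frac{\arcsin x_{0}}{x_{0}}=\frac{\alpha+u_{0}}{1+\alpha u_{0}},\qquad u_{0}:=\sqrt{1-x_{0}^{2}}\,,
\end{equation*}
so that $f_\alpha(x_{0})=(\alpha+u_{0})^{2}/(1+\alpha u_{0})$. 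The claim $f_\alpha(x_{0})\ge 4(1-1/\alpha^{2})$ then collapses to the polynomial identity
\begin{equation*}
\alpha^{2}(\alpha+u)^{2}-4(\alpha^{2}-1)(1+\alpha u)=\bigl(\alpha u-(\alpha^{2}-2)\bigr)^{2},
\end{equation*}
whose right-hand side is a square, hence non-negative for every real $u$ and in particular for $u=u_{0}$. Multiplying the resulting two-sided estimate on $f_\alpha$ by $x/\bigl(\alpha+\sqrt{1-x^{2}}\,\bigr)$ produces~\eqref{max-ineq-inv}, the minor bookkeeping at $x=0$ being immediate since both sides vanish there.
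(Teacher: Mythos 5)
Your proposal is correct and follows essentially the same route as the paper: read off the endpoint values $\alpha+1$ and $\pi\alpha/2$, apply the monotonicity from Theorem~\ref{Shafer-Fink-thm} in the first two cases, and in the case $\frac\pi2<\alpha<2$ use the critical-point equation $\frac{\arcsin x_0}{x_0}=\frac{\alpha+u_0}{1+\alpha u_0}$ to evaluate the interior minimum as $\frac{(\alpha+u_0)^2}{1+\alpha u_0}$. Your square identity $\alpha^{2}(\alpha+u)^{2}-4(\alpha^{2}-1)(1+\alpha u)=\bigl(\alpha u-(\alpha^{2}-2)\bigr)^{2}$ is a welcome addition, since it makes explicit the estimate $\frac{(\alpha+u_0)^2}{1+\alpha u_0}\ge4\bigl(1-1/\alpha^{2}\bigr)$ that the paper states without justification.
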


\section{Remarks}

Before proving our theorems, we would like to give several remarks on them.

\begin{rem}
Letting $x=\sin t$ for $t\in\bigl[0,\frac\pi2\bigr]$ yields the restatement of Theorem~\ref{Shafer-Fink-thm-ineq} as follows:
\begin{enumerate}
\item
If $\alpha\ge2$, then
\begin{equation}\label{Shafer-oppenheim-1}
\frac{(\alpha+1)\sin t}{\alpha+\cos t}\le t\le\frac{(\pi\alpha/2)\sin t}{\alpha+\cos t}, \quad 0\le t\le\frac\pi2.
\end{equation}
\item
If $0<\alpha\le\frac\pi2$, the inequality~\eqref{Shafer-ineq-arcsin-new1} reverses.
\item
If $\frac\pi2<\alpha<2$, then
\begin{equation}\label{Shafer-oppenheim-2}
\frac{4\bigl(1-1/\alpha^2\bigr)\sin t}{\alpha+\cos t}\le t \le\frac{\max\{\pi\alpha/2,\alpha+1\}\sin t}{\alpha+\cos t}, \quad 0\le t\le\frac\pi2.
\end{equation}
\end{enumerate}
For more information on the inequalities in~\eqref{Shafer-oppenheim-1} and~\eqref{Shafer-oppenheim-2}, please refer to \cite{Oppeheim-Sin-Cos.tex} and closely-related references therein.
\end{rem}

\begin{rem}
The Shafer-Fink's double inequality~\eqref{Shafer-Fink-ineq-arcsin} is the special case $\alpha=2$ in~\eqref{Shafer-ineq-arcsin-new1}.
\end{rem}

\begin{rem}
Taking $\alpha=\frac\pi2$ in~\eqref{Shafer-ineq-arcsin-new1} gives
\begin{equation}\label{alpha=frac-pi2}
\frac{\bigl(\pi^2/4\bigr)x}{\pi/2+\sqrt{1-x^2}\,} \le\arcsin x\le \frac{(\pi/2+1)x}{\pi/2+\sqrt{1-x^2}\,},\quad 0\le x\le1.
\end{equation}
This improves the inequality~\eqref{Zhu-mia-2} and recovers the right-hand side inequality of Theorem~8 on~\cite[p.~61]{Oppeheim-Zhu-mia}.
\par
The left-hand side inequalities in~\eqref{Zhu-mia-1} and~\eqref{alpha=frac-pi2} are not included each other.
\par
The lower bound in~\eqref{alpha=frac-pi2} and those in~\eqref{Shafer-ineq-arcsin} are not included each other.
\end{rem}

\begin{rem}
Since $\frac{\pi\alpha}2=\alpha+1$ has a unique root $\alpha=\frac2{\pi-2}\in\bigl(\frac\pi2,2\bigr)$, the inequality~\eqref{Malesevic-Beograd-Univ-97-ineq} follows from taking $\alpha=\frac2{\pi-2}$ in~\eqref{max-ineq-inv}.
\end{rem}

\begin{rem}
Let
$$
h_x(\alpha)=\frac{1-1/\alpha^2}{\alpha+\sqrt{1-x^2}\,}
$$
for $\frac\pi2<\alpha<2$ and $x\in(0,1)$. Then
$$
\alpha\bigl(3-\alpha^2\bigr)<{\alpha^3 \bigl(\alpha+\sqrt{1-x^2}\,\bigr)^2}h_x'(\alpha) ={3 \alpha-\alpha^3+2 \sqrt{1-x^2}\,} <2+3 \alpha-\alpha^3.
$$
This means that
\begin{enumerate}
\item
when $\frac\pi2<\alpha\le\sqrt3\,$ the function $\alpha\mapsto h_x(\alpha)$ is increasing;
\item
when $\sqrt3\,<\alpha<2$ the function $\alpha\mapsto h_x(\alpha)$ attains its maximum
$$
\frac{4\cos^2\bigl[\frac{1}{3} \arctan\bigl(\frac{x}{\sqrt{1-x^2}\,}\bigr)\bigr]-1} {4\bigl\{2\cos\bigl[\frac{1}{3} \arctan\bigl(\frac{x}{\sqrt{1-x^2}\,}\bigr)\bigr] +\sqrt{1-x^2}\,\bigr\} \cos^2\bigl[\frac{1}{3} \arctan\bigl(\frac{x}{\sqrt{1-x^2}\,}\bigr)\bigr]}
$$
at the point
$$
2 \cos \biggl[\frac{1}{3} \arctan\biggl(\frac{x}{\sqrt{1-x^2}\,}\biggr)\biggr],\quad x\in(0,1).
$$
\end{enumerate}
Therefore, the following two sharp inequalities may be derived from the left-hand side inequality in~\eqref{max-ineq-inv} for $x\in(0,1)$:
\begin{gather}
\arcsin x > \frac{(8/3)x}{\sqrt3\,+\sqrt{1-x^2}\,},\label{arcsin-cos-1}\\
\arcsin x >\frac{x\bigl\{4\cos^2\bigl[\frac{1}{3} \arctan\bigl(\frac{x}{\sqrt{1-x^2}\,}\bigr)\bigr]-1\bigr\}} {\bigl\{2\cos\bigl[\frac{1}{3} \arctan\bigl(\frac{x}{\sqrt{1-x^2}\,}\bigr)\bigr] +\sqrt{1-x^2}\,\bigr\} \cos^2\bigl[\frac{1}{3} \arctan\bigl(\frac{x}{\sqrt{1-x^2}\,}\bigr)\bigr]}.\label{arcsin-cos-2}
\end{gather}
\par
By the famous software M\textsc{athematica}~7.0, we reveal that the inequality~\eqref{arcsin-cos-2} is better than the left-hand side inequality in~\eqref{Shafer-Fink-ineq-arcsin} and the inequalities~\eqref{Zhu-mia-1} and~\eqref{arcsin-cos-1} and that it does not include the inequality~\eqref{Zhu-mia-2} and the left-hand side inequality~\eqref{alpha=frac-pi2}.
\end{rem}

\begin{rem}
The method to prove Theorem~\ref{Shafer-Fink-thm} and Theorem~\ref{Shafer-Fink-thm-ineq} in next section have been used in \cite{Carlson-Arccos.tex, Oppeheim-Sin-Cos.tex, Shafer-ArcSin.tex, Arc-Hyperbolic-Sine.tex, Carlson-Arccos-further.tex, Shafer-Fink-ArcSin.tex-arXiv, Shafer-ArcTan.tex, Shafer-ArcTan.tex-JIA, Arc-Hyperbolic-Sine-alter.tex} and closely-related references therein.
\end{rem}

\begin{rem}
The method used in next section to prove Theorem~\ref{Shafer-Fink-thm} and Theorem~\ref{Shafer-Fink-thm-ineq} is more elementary than the one utilized in \cite{Fink-Beograd-Univ-95, Malesevic-Beograd-Univ-97, Oppeheim-Zhu-mia, Zhu-JIA-08-Shafer-Fink, Zhu-JIA-07-Shafer-Fink}.
\end{rem}

\begin{rem}
This paper is a slightly modified version of the preprint~\cite{Shafer-Fink-ArcSin.tex-arXiv}.
\end{rem}

\section{Proofs of theorems}

Now we are in a position to prove our theorems.

\begin{proof}[Proof of Theorem~\ref{Shafer-Fink-thm}]
Direct differentiation yields
\begin{align*}
f_\alpha'(x)&=\frac{\alpha+1/\sqrt{1-x^2}\,}{x^2} \biggl[\frac{x\bigl(\alpha+\sqrt{1-x^2}\,\bigr)}{1+\alpha\sqrt{1-x^2}\,}-\arcsin x\biggr]\\
&\triangleq \frac{\alpha+1/\sqrt{1-x^2}\,}{x^2} h_\alpha(x),\\
h_\alpha'(x)&=\frac{x^2\bigl(\alpha^2-2-\alpha\sqrt{1-x^2}\,\bigr)} {\bigl(1+\alpha\sqrt{1-x^2}\,\bigr)^2\sqrt{1-x^2}\,}.
\end{align*}
\par
Because
$$
\alpha^2-\alpha-2\le\alpha^2-2-\alpha\sqrt{1-x^2}\,\le\alpha^2-2
$$
on $[0,1]$, the derivative $h_\alpha'(x)$ is negative (or positive respectively) when $0<\alpha\le\sqrt2\,$ (or $\alpha\ge2$ respectively). Moreover, if $\sqrt2\,<\alpha<2$, the derivative $h_\alpha'(x)$ has a unique zero on $(0,1)$. As a result, the function $h_\alpha(x)$ is increasing (or decreasing respectively) when $\alpha\ge2$ (or $0<\alpha\le\sqrt2\,$ respectively) and has a unique minimum on $(0,1)$ when $\sqrt2\,<\alpha<2$.
It is easy to obtain that $h_\alpha(0)=0$ and $h_\alpha(1)=\alpha-\frac\pi2$. Hence,
\begin{enumerate}
\item
when $\alpha\ge2$, the function $h_\alpha(x)$ and $f_\alpha'(x)$ are positive, and so $f_\alpha(x)$ is strictly increasing on $(0,1)$;
\item
when $0<\alpha\le\sqrt2\,$, the function $h_\alpha(x)$ and $f_\alpha'(x)$ are negative, and so $f_\alpha(x)$ is strictly decreasing on $(0,1)$;
\item
when $\sqrt2\,<\alpha<2$ and $\alpha\le\frac\pi2$, the function $h_\alpha(x)$ and $f_\alpha'(x)$ are also negative, and so $f_\alpha(x)$ is also strictly decreasing on $(0,1)$;
\item
when $\sqrt2\,<\alpha<2$ and $\alpha>\frac\pi2$, the function $h_\alpha(x)$ and $f_\alpha'(x)$ have the same unique zero on $(0,1)$, and so the function $f_\alpha(x)$ has a unique minimum on $(0,1)$.
\end{enumerate}
\par
On other hand, the derivative $f_\alpha'(x)$ may be rearranged as
\begin{align*}
f_\alpha'(x)&=\frac1{x^2} \biggl[x\biggl(1+\frac{\alpha}{\sqrt{1-x^2}\,}\biggr) -\biggl(\alpha+\frac{1}{\sqrt{1-x^2}\,}\biggr)   \arcsin x\biggr]\\
&\triangleq \frac1{x^2} H_\alpha(x),\\
H_\alpha'(x)&=-\frac{x \bigl[x \bigl(\sqrt{1-x^2}\,-\alpha\bigr)+\arcsin x\bigr]}{(1-x^2)^{3/2}}.
\end{align*}
When $\alpha\le0$, the derivative $H_\alpha'(x)$ is negative, and so the function $H_\alpha(x)$ is strictly decreasing on $(0,1)$. From
$$
\lim_{x\to0^+}H_\alpha(x)=0,
$$
it follows that $H_\alpha(x)<0$ on $(0,1)$. Therefore, when $\alpha\le0$, the derivative $f_\alpha'(x)$ is negative and the function $f_\alpha(x)$ is strictly decreasing on $(0,1)$. The proof of Theorem~\ref{Shafer-Fink-thm} is complete.
\end{proof}

\begin{proof}[Proof of Theorem~\ref{Shafer-Fink-thm-ineq}]
It is easy to obtain that
$$
\lim_{x\to0^+}f_\alpha(x)=\alpha+1\quad \text{and}\quad f_\alpha(1)=\frac\pi2\alpha.
$$
From the monotonicity obtained in Theorem~\ref{Shafer-Fink-thm}, it follows that
\begin{enumerate}
\item
when $\alpha\ge2$, we have
\begin{equation}\label{Shafer-ineq-arcsin-pre}
\alpha+1<\Bigl(\alpha+\sqrt{1-x^2}\,\Bigr)\frac{\arcsin x}{x}\le\frac\pi2\alpha
\end{equation}
on $(0,1]$, which can be rewritten as the inequality~\eqref{Shafer-ineq-arcsin-new1};
\item
when $0<\alpha\le\frac\pi2$, the inequality~\eqref{Shafer-ineq-arcsin-pre} is reversed;
\item
when $\frac\pi2<\alpha<2$, we have
$$
\Bigl(\alpha+\sqrt{1-x^2}\,\Bigr)\frac{\arcsin x}{x}\le\max\biggl\{\frac\pi2\alpha,\alpha+1\biggr\}
$$
which can be rearranged as the right-hand side inequality in~\eqref{max-ineq-inv}.
\end{enumerate}
\par
On the other hand, when $\frac\pi2<\alpha<2$, the minimum point $x_0\in[0,1]$ of $f_\alpha(x)$ satisfies
$$
\frac{\arcsin x_0}{x_0}=\frac{\alpha+\sqrt{1-x_0^2}\,}{1+\alpha\sqrt{1-x_0^2}\,}.
$$
Hence, the minimum equals
$$
f_\alpha(x_0)=\frac{\bigl(\alpha+\sqrt{1-x_0^2}\,\bigr)^2}{1+\alpha\sqrt{1-x_0^2}\,} =\frac{(\alpha+u_0)^2}{1+\alpha u_0}\ge4\biggl(1-\frac1{\alpha^2}\biggr),\quad u_0\in[0,1].
$$
The right-hand side inequality in~\eqref{max-ineq-inv} follows. The proof of Theorem~\ref{Shafer-Fink-thm-ineq} is proved.
\end{proof}

\end{document}